\newcommand{\CF}{{\mathcal {F}}}
\newcommand{\CS}{{\mathcal {S}}}
\newcommand{\RC}{{\mathrm {C}}}
\newcommand{\Ad}{{\mathrm{Ad}}}
\newcommand{\GL}{{\mathrm{GL}}}
\newcommand{\Hom}{{\mathrm{Hom}}}
\newcommand{\uInd}{{{}^\mathrm{u}\mathrm{Ind}}}
\newcommand{\Ind}{{\mathrm{Ind}}}
\newcommand{\N}{{\mathrm{N}}}
\newcommand{\con}{\textit{C}}
\newcommand{\od}{\operatorname{d}}
\newcommand{\oP}{\operatorname{P}}
\newcommand{\C}{\mathbb{C}}
\newcommand{\R}{\mathbb R}
\renewcommand{\N}{\mathbb{N}}
\newcommand{\abs}[1]{\lvert#1\rvert}
\newcommand{\be}{\begin {equation}}
\newcommand{\ee}{\end {equation}}
\newcommand{\bp}{\begin {proof}}
\newcommand{\ep}{\end {proof}}
\newcommand{\bee}{\begin {equation*}}
\newcommand{\eee}{\end {equation*}}
\theoremstyle{Theorem}
\newtheorem{thm}{Theorem}[section]
\theoremstyle{Theorem}
\newtheorem{lem}{Lemma}[section]
\theoremstyle{Theorem}
\theoremstyle{Plain}
\theoremstyle{Theorem}
\theoremstyle{Definition}
\begin{document}

\title[Homogeneous distributions]{Homogeneous distributions on finite dimensional vector spaces}

\author [H. Xue] {Huajian Xue}

\address{Academy of Mathematics and Systems Science\\
Chinese Academy of Sciences\\
Beijing, 100190,  China} \email{xuehuajian@126.com}

\subjclass[2010]{22E50}

\keywords{representation, Schwartz function, tempered distribution, degenerate principal series}

\begin{abstract}
Let $V$ be a finite dimensional vector space over a local field $F$. Let $\chi: F^\times \rightarrow \C^\times$ be an arbitrary  character of $F^\times$. We determine the structure of the natural representation of $\GL(V)$ on the space  $\mathcal{S}^*(V)^\chi$ of $\chi$-invariant distributions on $V$.
\end{abstract}

\maketitle

\section{Introduction}\label{secintro}

Let $V$ be a  vector space over a local field $F$, of finite dimension $n\geq 1$. For every $g\in \GL(V)$ and every distribution $\eta$ on $V$, define
\be\label{action}
  g. \eta:=\textrm{the push-forward of $\eta$ through the map $g: V\rightarrow V$. }
\ee
For each character $\chi: F^\times \rightarrow \C^\times$, a distribution $\eta$ on $V$ is said to be $\chi$-invariant if
\[
  a. \eta=\chi(a)\, \eta,\quad\textrm{for all $a\in F^\times$}.
\]
Here and as usual, $F^\times$ is  identified with  the center of $\GL(V)$.  By \cite[Theorem 4.0.2]{AGS}, we know that every $\chi$-invariant distribution on $V$ is tempered when $F$ is archimedean. By convention, every distribution on $V$ is defined to be tempered when $F$ is non-archimedean. The goal of this paper is to understand the space
\be\label{invdis}
  \mathcal{S}^*(V)^\chi:=\{\eta\in \CS^*(V)\mid a. \eta=\chi(a)\, \eta\}
\ee
of $\chi$-invariant tempered distributions on $V$, as a representation of $\GL(V)$ under the action \eqref{action}. Here and as usual, $\mathcal{S}(V)$ denotes the space of Schwartz or Schwartz-Bruhat functions on $V$, when  $F$ is respectively archimedean or non-archimedean; and $\CS^*(V)$ denotes the space of all (continuous in the archimedean case) linear functionals on  $\mathcal{S}(V)$. It is a fundamental fact in Tate's thesis that the space \eqref{invdis} is one dimensional when $n=1$ (see \cite[Section 1]{Wei}). Thus we will focus on the case when $n\geq 2$.

Dualizing the action \eqref{action}, we have a representation of $\GL(V)$ on  $\mathcal{S}(V)$ by
\[
  (g. f)(x):=f(g^{-1}x),\quad \textrm{for all $g\in \GL(V)$, $f\in \mathcal{S}(V)$, $x\in V$}.
\]
Define the normalized $(F^\times,\chi)$-coinvariant space
\be\label{coinvs}
   \mathcal{S}_\chi(V):=\left(\mathcal{S}(V)\otimes \abs{\det}_F^{-\frac{1}{2}}\right)_{F^\times, \chi},
\ee
where $\abs{\det}_F$ denotes the positive character
\[
   \GL(V)\rightarrow \C^\times, \quad g\mapsto \abs{\det(g)}_F,
\]
$\abs{\,\cdot\, }_F$ denotes the normalized absolute value on $F$, and for a smooth representation $U$ of $\GL(V)$,  $U_{F^\times,\chi}$ denotes the maximal  (Hausdorff in the archimedean case) quotient of $U$ on which $F^\times$ acts through the character $\chi$. Here and as usual, we do not distinguish a one dimensional representation with its corresponding character.
Then we have
\be\label{disvs}
  \mathcal{S}^*(V)^\chi\cong \left(\mathcal{S}_{\chi^{-1}\,\cdot\, \abs{\,\cdot\,}_F^{-\frac{n}{2}}}(V)\right)^*\otimes \abs{\det}_F^{-\frac{1}{2}},
\ee
as representations of $\GL(V)$. Here and henceforth, we use a superscript $\,^*$ to indicate the dual space in various contexts.  Thus we only need to study the representation \eqref{coinvs}.

As before, write $\mathcal{S}(V\setminus \{0\})$ for the space of Schwartz
 or Schwartz-Bruhat functions on $V\setminus \{0\}$, when  $F$ is respectively archimedean or non-archimedean (see \cite{AG} for the definition of Schwartz functions in the archimedean case). Similar to \eqref{coinvs}, define
 \be\label{coinvs2}
   \mathcal{S}_\chi(V\setminus\{0\}):=\left(\mathcal{S}(V\setminus\{0\})\otimes \abs{\det}_F^{-\frac{1}{2}}\right)_{F^\times,\chi}.
\ee
Then the embedding $ \mathcal{S}(V\setminus\{0\})\hookrightarrow \mathcal{S}(V)$ induces a homomorphism
\be \label{map-j}
  \mathrm j_\chi:  \mathcal{S}_\chi(V\setminus\{0\})\rightarrow  \mathcal{S}_\chi(V)
\ee
of representations of $\GL(V)$.

It is easy to see that the representation $\mathcal{S}_\chi(V\setminus\{0\})$ is isomorphic to a degenerate principal series. More precisely, fix an arbitrary nonzero vector $v_0\in V$, and write $\oP(v_0)=F^\times \times \oP^\circ(v_0)$ for the maximal parabolic subgroup of $\GL(V)$ stabilizing $Fv_0$, where $\oP^\circ(v_0)$ denotes the stabilizer of $v_0$ in $\GL(V)$. Then  the linear map
\[
  \begin{array}{rcl}
   \mathcal{S}(V\setminus\{0\})\otimes \abs{\det}_F^{-\frac{1}{2}} &\rightarrow &\con^\infty(\GL(V)),\\
      \phi\otimes 1 &\mapsto & \left(g\mapsto \int_{F^\times}  \phi((ga)^{-1}v_0)\cdot \abs{\det(ga)}_F^{-\frac{1}{2}} \cdot\chi^{-1}(a) \,\od^\times\! a\right)
  \end{array}
\]
induces a $\GL(V)$-intertwining isomorphism
\be\label{isod}
  \mathcal{S}_\chi(V\setminus\{0\})\cong \Ind_{\oP(v_0)}^{\GL(V)} \chi\otimes 1.
\ee
Here $\od^\times\! a$ is a Haar measure on the multiplicative group $F^\times$. And ``$\Ind$'' indicates the normalized smooth induction, on which $\GL(V)$ acts by right translation.  While ``$1$'' stands for the trivial character of $\oP^\circ(v_0)$. The structure of this degenerate principal series is well-known (see Lemma \ref{degen}).

Denote  by $\RC_F$ the submonoid of  $\Hom(F^\times,\C^\times)$ generated by characters of the form $\iota|_{F^\times}:F^\times \rightarrow \C^\times$, where $\iota: F\hookrightarrow \C$ is a continuous field embedding. Explicitly, denote by $\N$ the set of non-negative integers. If $F=\R$, let $\iota$ be the natural imbedding of $\R$ into $\C$. If $F=\C$, let $\iota_1$, $\iota_2$ be the identity map and complex conjugate respectively. Then
\[
\RC_F=\begin{cases}
\{(\iota\vert_{\R^\times})^r\vert r\in \N\},  & \mbox{ if } F=\R; \\
\{(\iota_1\vert_{\C^\times})^r(\iota_2\vert_{\C^\times})^s\vert r,s\in \N\},  & \mbox{ if } F=\C; \\
\{1\}, & \mbox{ if } F \mbox{ is non-archimedean}.
\end{cases}
\]
In Section \ref{secd0}, we will  define an irreducible finite dimensional representation $\sigma_{V,\chi}$ of $\GL(V)$ for  each $\chi\in \RC_F$.
Note that
\[
 \RC^+(n):=\abs{\, \cdot\,}_F^{\frac{n}{2}}\, \RC_F\quad\textrm{and}\quad \RC^-(n):=\{\chi^{-1}\mid \chi\in \RC^+(n)\}
\]
are disjoint subsets of $\Hom(F^\times, \C^\times)$.

Now the main result of this paper is formulated as follows.

\begin{thm}\label{thma00} Let $V$ be an $n$-dimensional ($n\geq 2$) vector space over a local field $F$, and let $\chi$ be a character of $F^\times$. Define the homomorphism $\mathrm j_\chi:  \mathcal{S}_\chi(V\setminus\{0\})\rightarrow  \mathcal{S}_\chi(V)$ as \eqref{map-j}. Then we have the following:

(a) If $\chi\notin \RC^+(n)\cup \RC^-(n)$, then $\mathrm j_\chi$ is an isomorphism of irreducible representations.

(b) If $\chi\in \RC^+(n)$,  then $\mathrm j_\chi$ is an isomorphism and $\mathcal{S}_\chi(V)$ has a unique irreducible subrepresentation, and the corresponding  quotient representation is isomorphic to $\sigma_{V, \chi\cdot \abs{\,\cdot\,}_F^{-\frac{n}{2}}}\otimes \abs{\det}_F^\frac{1}{2}$.

(c) If $\chi\in \RC^-(n)$, then both $\mathcal{S}_\chi(V)$ and $\mathcal{S}_\chi(V\setminus\{0\})$ have length $2$ and have a unique irreducible subrepresentation. The unique irreducible subrepresentation of $\mathcal{S}_\chi(V)$ is isomorphic to
\[
  \mathrm{ker} (\mathrm j_\chi)\cong \mathrm{coker}(\mathrm  j_\chi)\cong \left(\sigma_{V, \chi^{-1}\cdot \abs{\,\cdot\,}_F^{-\frac{n}{2}}}\otimes \abs{\det}_F^\frac{1}{2}\right)^*.
  \]
  \end{thm}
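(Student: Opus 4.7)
My plan is to exploit the short exact sequence of $\GL(V)$-representations
\[
0 \to \mathcal{S}(V\setminus\{0\}) \to \mathcal{S}(V) \to Q \to 0,
\]
where $Q=\mathcal{S}(V)/\mathcal{S}(V\setminus\{0\})$ is the space of germs at the origin. Tensoring with $\abs{\det}_F^{-1/2}$ and applying the right-exact $(F^\times,\chi)$-coinvariant functor yields
\[
\mathcal{S}_\chi(V\setminus\{0\}) \xrightarrow{\mathrm j_\chi} \mathcal{S}_\chi(V) \to Q_\chi \to 0,
\]
where $Q_\chi:=(Q\otimes\abs{\det}_F^{-1/2})_{F^\times,\chi}$; the source is identified via \eqref{isod} with the degenerate principal series $\Ind_{\oP(v_0)}^{\GL(V)}\chi\otimes 1$, whose structure is given by Lemma \ref{degen}.

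The first main step is to compute $Q$ and extract $Q_\chi$. In the non-archimedean case, $Q\cong\C$ carries the trivial $\GL(V)$-action. In the archimedean case, Borel's lemma identifies $Q$ with a completion of $\Sym(V^*)$ (over $\R$) or of $\Sym(V^*)\otimes\Sym(\overline{V^*})$ (over $\C$), filtered by order of vanishing, whose graded pieces are finite-dimensional $\GL(V)$-irreducibles; their central characters after the $\abs{\det}_F^{-1/2}$ twist sweep out exactly $\RC^-(n)$. Hence $Q_\chi=0$ unless $\chi\in\RC^-(n)$, and when $\chi\in\RC^-(n)$ exactly one graded piece contributes, which from the definition of $\sigma_{V,\cdot}$ in Section~\ref{secd0} one identifies as $Q_\chi\cong(\sigma_{V,\chi^{-1}\cdot\abs{\,\cdot\,}_F^{-n/2}}\otimes\abs{\det}_F^{1/2})^*$.

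For cases (a) and (b), where $\chi\notin\RC^-(n)$, a filtration-level argument gives not only $Q_\chi=0$ but also the vanishing of the first derived functor of $(F^\times,\chi)$-coinvariants on $Q\otimes\abs{\det}_F^{-1/2}$; hence $\mathrm j_\chi$ is an isomorphism and the stated structure follows from Lemma~\ref{degen}. In case (c), the long exact sequence produces a nonzero kernel of $\mathrm j_\chi$ coming from the single matching graded piece, which lands inside the finite-dimensional unique subrepresentation of $\mathcal{S}_\chi(V\setminus\{0\})$ provided by Lemma~\ref{degen}; this yields $\ker(\mathrm j_\chi)\cong\coker(\mathrm j_\chi)\cong(\sigma_{V,\chi^{-1}\cdot\abs{\,\cdot\,}_F^{-n/2}}\otimes\abs{\det}_F^{1/2})^*$. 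The delicate point, and the main obstacle, is to pin down that the unique irreducible subrepresentation of $\mathcal{S}_\chi(V)$ is isomorphic to this finite-dimensional piece; since the right-exact sequence only displays the image of $\mathrm j_\chi$ as a submodule, one needs a separate argument identifying the socle of $\mathcal{S}_\chi(V)$. A natural approach is to use the duality \eqref{disvs} to recast the question in terms of $\chi^{-1}\cdot\abs{\,\cdot\,}_F^{-n/2}$-invariant distributions, where the relevant socle should be constructible explicitly.
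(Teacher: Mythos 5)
Your setup agrees with the paper through the right‑exact sequence $\mathcal S_\chi(V\setminus\{0\})\xrightarrow{\mathrm j_\chi}\mathcal S_\chi(V)\to Q_\chi\to 0$ and the identification of $Q_\chi$ via distributions supported at $0$ (the paper's Lemma~\ref{distribution0}). After that the two proofs diverge, and your version has two genuine gaps.

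First, for the injectivity of $\mathrm j_\chi$ in cases (a) and (b) you invoke ``vanishing of the first derived functor of $(F^\times,\chi)$-coinvariants on $Q\otimes\abs{\det}_F^{-1/2}$.'' This is not a safe step: in the archimedean case the coinvariant functor is the \emph{Hausdorff} quotient on smooth Fréchet representations, $Q$ is a projective limit of finite-dimensional pieces rather than a direct sum, and there is no off-the-shelf long exact sequence for this functor that you can appeal to. Even in the $p$-adic case the vanishing must actually be checked. The paper sidesteps all of this with an elementary dimension count (Lemma~\ref{infdim0}): $\mathcal S_\chi(V)$ is infinite dimensional, so $\ker(\mathrm j_\chi)$ has infinite codimension in $\mathcal S_\chi(V\setminus\{0\})$; but by Lemma~\ref{degen} the only proper nonzero subrepresentation of the degenerate principal series has finite codimension (its quotient is the finite-dimensional $\sigma_{V,\cdot}$ twist), so $\ker(\mathrm j_\chi)=0$. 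You should replace the derived-functor claim by this argument.

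Second, in case (c) the assertion that ``the long exact sequence produces a nonzero kernel \dots which lands inside the finite-dimensional unique subrepresentation'' does not follow from the sequence alone: the boundary map could a priori be zero, and in any event the sequence only controls the image, not the socle of $\mathcal S_\chi(V)$. You notice this yourself and leave the identification of the socle as ``a natural approach'' via \eqref{disvs}. The paper closes the gap differently and completely: it applies the already-proved Part (b) to $V^*$ and transports the conclusion back via the Fourier-transform isomorphism $\mathcal S_\chi(V)\cong\mathcal S_{\chi^{-1}}(V^*)$ of \eqref{isovv} (intertwining for $g\mapsto g^{-t}$). This immediately gives that $\mathcal S_\chi(V)$ has length $2$ with a unique irreducible subrepresentation; then, since $\mathrm{coker}(\mathrm j_\chi)$ is the irreducible $\bigl(\sigma_{V,\chi^{-1}\abs{\,\cdot\,}_F^{-n/2}}\otimes\abs{\det}_F^{1/2}\bigr)^{*}$ by the exact sequence, the image of $\mathrm j_\chi$ has length $1$, forcing $\ker(\mathrm j_\chi)$ to be the unique irreducible subrepresentation of $\mathcal S_\chi(V\setminus\{0\})$, identified by Lemma~\ref{degen}(c). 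You would need this (or an equivalent) argument to actually pin down the socle; the duality \eqref{disvs} alone does not obviously do it.
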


Note that $\mathcal{S}(V)$ carries an action of $\GL_1(F)\times \GL(V)$ which is defined by
\[
((g_1, g_2). f)(x):=f(g_2^{-1}xg_1),\quad (g_1, g_2)\in \GL_1(F)\times \GL(V), f\in \mathcal{S}(V), x\in V.
\]
Denote by $\Theta(\chi)$ the full theta lift of the representation $\chi$ of $\GL_1(F)$ to $\GL(V)$. Then we have
\[
\left(\mathcal{S}(V)\otimes (\abs\cdot_F ^{\frac{n}{2}}\otimes\abs{\det}_F^{-\frac{1}{2}})\right)_{\GL_1(F), \chi}\cong\chi \otimes \Theta(\chi)
\]
as representations of $\GL_1(F)\times \GL(V)$. It follows that $\Theta(\chi)\cong \mathcal{S}_{\chi^{-1}}(V)$ in our setting.
It is a fundamental fact that the full theta lift always has a unique irreducible quotient whenever it is nonzero (see \cite{How, Wald, Min, GT, GaS}). Howe also expects that in many cases, the full theta lift also has a unique irreducible subrepresentation, and the irreducible  subrepresentation is ``large" and the irreducible quotient representation is ``small". Theorem \ref{thma00} provides some evidences for these expectations.

\textbf{Acknowledgements}: The author would like to thank Binyong Sun for the encouragements and fruitful discussions. He also thanks the referee for his careful reading and very helpful comments and suggestions on the previous version of this paper.

\section{Distributions supported at $0$}\label{secd0}

We continue with the notation of the Introduction.
For every $\chi\in \RC_F$, we shall define a representation $\sigma_{V,\chi}$ of $\GL(V)$ in what follows.
If $F$ is non-archimedean, we define $\sigma_{V,1}$ to be the one dimensional trivial representation for the unique member $1\in \RC_F$. If $F\cong\R$, let $\chi=(\iota|_{F^\times})^r \in\RC_F,(r\in\N)$ (as is defined in the Introduction), we define
\[
   \sigma_{V,\chi}:=\mathrm{Sym}^r(V\otimes_{F,\iota} \C).
\]
Here and henceforth, $\mathrm{Sym}^r$ indicates the $r$-th symmetric power. If $F\cong\C$, let $\chi=(\iota_1|_{F^\times})^r\cdot (\iota_2|_{F^\times})^s \in\RC_F,(r,s\in\N)$,  we define
\[
   \sigma_{V,\chi}:=\mathrm{Sym}^r(V\otimes_{F,\iota_1} \C)\otimes \mathrm{Sym}^s(V\otimes_{F,\iota_2} \C).
\]
In all cases, $\sigma_{V,\chi}$ is an irreducible finite dimensional representation of $\GL(V)$ of central character $\chi$.

Let $\CS^*(V,\{0\})$ denotes the space of tempered distributions on $V$ whose support is contained in $\{0\}$. For each character $\chi: F^\times \rightarrow \C^\times$, put
\[
  \CS^*(V,\{0\})^\chi:=\CS^*(V,\{0\})\cap \CS^*(V)^\chi,
\]
which is a representation of $\GL(V)$.

Denote by $\delta_0$ the Dirac distribution. Set $V=F^n$. If $F=\R$, for $I=(k_1,k_2,\cdots,k_n)\in \N^n$, let $\partial^I$ be the differential operator which takes $k_i$-th derivative for the $i$-th variable for all $1\leq i \leq n$. If $F=\C$, define $\partial^{I_1}\bar\partial^{I_2}$ for $I_1, I_2\in \N^n$ similarly. By \cite[Theorem 1.7]{SS}, we have
\[
\CS^*(V,\{0\})=\begin{cases}
\mbox{span}\{\partial^I\delta_0\vert I\in\N^n\}, & \mbox{ if } F=\R; \\
\mbox{span}\{\partial^{I_1}\bar\partial^{I_2}\delta_0\vert I_1, I_2\in \N^n\}, & \mbox{ if } F=\C;\\
\mbox{span}\{\delta_0 \}, & \mbox{ if } F \mbox{ is non-archimedean}.
\end{cases}
\]
Then it is easy to deduce the following
\begin{lem}\label{distribution0}
If $\chi\in \RC_F$, then $\CS^*(V,\{0\})^\chi\cong \sigma_{V,\chi}$; otherwise it is zero. Moreover,
\[
  \CS^*(V,\{0\})=\bigoplus_{\chi\in \RC_F} \CS^*(V,\{0\})^\chi.
\]
\end{lem}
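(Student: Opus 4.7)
The plan is to verify the lemma by direct computation on the explicit generators of $\CS^*(V,\{0\})$ recalled just above the statement from \cite[Theorem 1.7]{SS}.

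First, I would compute the eigencharacter of each generator under $F^\times\subset\GL(V)$ (acting by scalars) and observe that together these characters exhaust $\RC_F$. In the non-archimedean case $a.\delta_0=\delta_0$ is immediate. In the real case, the chain rule applied to $f(ax)$ gives
\[
 a.\partial^I\delta_0 \;=\; a^{|I|}\,\partial^I\delta_0 \qquad (a\in F^\times,\ I\in\N^n),
\]
so the eigencharacter is $(\iota|_{F^\times})^{|I|}$. In the complex case the analogous computation yields $a.\partial^{I_1}\bar\partial^{I_2}\delta_0 \;=\; a^{|I_1|}\bar a^{|I_2|}\,\partial^{I_1}\bar\partial^{I_2}\delta_0$, i.e.\ eigencharacter $(\iota_1|_{F^\times})^{|I_1|}(\iota_2|_{F^\times})^{|I_2|}$. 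Since distinct elements of $\RC_F$ have distinct restrictions to $F^\times$, this step already produces the direct-sum decomposition and shows $\CS^*(V,\{0\})^\chi=0$ for $\chi\notin\RC_F$.

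Second, I would identify each nonzero eigenspace with $\sigma_{V,\chi}$ as a $\GL(V)$-module. The key observation is that for $v\in V$ and $g\in\GL(V)$, the chain rule gives $g.(D_v\delta_0)=D_{gv}\delta_0$, where $D_v$ is the directional derivative; iterating,
\[
 g.(D_{v_1}\cdots D_{v_r}\delta_0) \;=\; D_{gv_1}\cdots D_{gv_r}\delta_0.
\]
In the real case the resulting linear map $\mathrm{Sym}^r(V\otimes_{F,\iota}\C)\to \CS^*(V,\{0\})$ is therefore $\GL(V)$-equivariant, lands in the $(\iota|_{F^\times})^r$-eigenspace, and is an isomorphism onto it by a dimension count against $\binom{n+r-1}{r}$. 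The complex case is handled by the same construction applied separately to holomorphic and antiholomorphic directional derivatives, corresponding to the two conjugate copies $V\otimes_{F,\iota_1}\C$ and $V\otimes_{F,\iota_2}\C$.

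The main obstacle is really only the setup of the complex case, where one must be careful that antiholomorphic directional derivatives are naturally parameterized by $V\otimes_{F,\iota_2}\C$ (the complex-conjugate space) in order for the identification to be $\GL(V)$-equivariant. Everything else reduces to routine applications of the chain rule.
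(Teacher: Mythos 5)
Your proof is correct and takes essentially the same approach the paper intends: the paper recalls the explicit description of $\CS^*(V,\{0\})$ from \cite[Theorem 1.7]{SS} and declares the lemma ``easy to deduce,'' and your argument simply carries out that deduction (eigencharacter computation by the chain rule, grading by total degree, and the $\GL(V)$-equivariant identification of each graded piece with $\sigma_{V,\chi}$, with the correct observation that the antiholomorphic directions correspond to $V\otimes_{F,\iota_2}\C$).
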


\section{Fourier transform}\label{secpre}

 Denote by $V^*$ the dual space of $V$. The Fourier transform yields a linear isomorphism
\[
  \CF: \CS(V)\otimes \abs{\det}_F^{-\frac{1}{2}}\rightarrow \CS(V^*)\otimes \abs{\det}_F^{-\frac{1}{2}}, \quad \phi\otimes 1\mapsto \widehat \phi\otimes 1,
\]
where
\[
   \widehat \phi(\lambda):=\int_V \phi(x)\psi(\lambda(x)) \od\! x.
\]
Here $\od\! x$ is a fixed Haar measure on $V$, and $\psi$ is a fixed non-trivial unitary character $\psi$ on $F$. It  is routine to check that
\[
  \CF(g.\eta)=g^{-t}.(\CF(\eta)),\quad \textrm{for all } g\in \GL(V), \, \eta\in \CS(V)\otimes \abs{\det}_F^{-\frac{1}{2}}.
\]
Here $g^{-t}\in \GL(V^*)$ denotes the inverse transpose of $g$. Consequently, $\CF$ induces a linear isomorphism
\be\label{isovv}
  \mathcal{S}_\chi(V)\cong \mathcal{S}_{\chi^{-1}}(V^*)
\ee
which is intertwining with respect to the isomorphism $\GL(V)\rightarrow \GL(V^*), g\mapsto g^{-t}$.

\begin{lem}\label{infdim0}
If $n\geq 2$, then the space $\mathcal{S}_\chi(V)$ is infinite dimensional.
\end{lem}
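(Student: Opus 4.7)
The plan is to pass to the dual side via the isomorphism \eqref{disvs}. That isomorphism identifies $\CS_\chi(V)^*$, up to a twist by $\abs{\det}_F^{1/2}$, with the space $\CS^*(V)^{\chi'}$ of $\chi'$-equivariant tempered distributions on $V$, where $\chi':=\chi^{-1}\cdot\abs{\,\cdot\,}_F^{-n/2}$. It therefore suffices to exhibit infinitely many linearly independent elements of $\CS^*(V)^{\chi'}$.

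For each non-zero vector $v\in V$, I will construct a distribution $\eta_v\in\CS^*(V)^{\chi'}$ whose restriction to $V\setminus\{0\}$ is supported on the single $F^\times$-orbit $F^\times v$. The basic formula is
\[
  \eta_v(\phi):=\int_{F^\times}\phi(av)\,\chi'(a)\,\od^\times\!a,\qquad \phi\in\CS(V).
\]
For characters $\chi'$ for which $\abs{\chi'(a)}$ decays sufficiently rapidly as $a\to 0$, this integral converges absolutely on $\CS(V)$ and manifestly defines a $\chi'$-equivariant tempered distribution. For general $\chi'$, I would use Tate-style meromorphic continuation (applied to the restriction $\phi|_{Fv}$, which is a Schwartz function on the one-dimensional subspace $Fv$) to produce an element of $\CS^*(V)^{\chi'}$; at polar values of the parameter the finite-part prescription is used, with the polar residue being supported at $\{0\}$ and hence controlled by Lemma \ref{distribution0}.

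Distinct lines $Fv$, $Fv'$ through the origin meet only at $0$, so the distributions $\eta_v$ and $\eta_{v'}$ restrict to $V\setminus\{0\}$ on disjoint $F^\times$-orbits, and their classes modulo $\CS^*(V,\{0\})^{\chi'}$ are linearly independent. Since $n\geq 2$, the projective space $\BP(V)$ has positive dimension in the archimedean case and infinitely many points in the non-archimedean case; in either case the family $\{\eta_v\}$ spans an infinite-dimensional subspace of $\CS^*(V)^{\chi'}/\CS^*(V,\{0\})^{\chi'}$. Since Lemma \ref{distribution0} gives $\dim\CS^*(V,\{0\})^{\chi'}<\infty$, we obtain $\dim\CS^*(V)^{\chi'}=\infty$ and hence $\dim\CS_\chi(V)=\infty$. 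The main obstacle will be the Tate regularization step: one must verify that at every $\chi'$ the regularized $\eta_v$ remains non-trivial modulo $\CS^*(V,\{0\})^{\chi'}$ so that distinct lines continue to give distinct classes; this should follow from the explicit structure of the local zeta integral on $Fv$, whose poles match precisely the finite-dimensional space of distributions supported at $\{0\}$.
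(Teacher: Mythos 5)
Your plan is the same as the paper's at its core: push everything to the dual side via~\eqref{disvs}, and then exhibit an infinite family of $\chi'$-equivariant distributions supported on distinct lines through the origin, where $\chi'=\chi^{-1}\cdot\abs{\,\cdot\,}_F^{-n/2}$. The paper realizes these distributions exactly as you do, as push-forwards of the one-dimensional Tate distribution on each line $L\hookrightarrow V$.

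However, there is a genuine gap, and it is precisely the one you flag at the end as ``an obstacle that should follow from the explicit structure of the local zeta integral.'' It does not follow; in fact the claim you need is false. Your $\eta_v$ is, by construction, the pullback of the one-variable Tate distribution $\phi\mapsto\int_{F^\times}\phi(av)\,\chi'(a)^{-1}\,\od^\times a$ to the line $L=Fv$. This family is meromorphic in the natural parameter, and it has poles exactly at those $\chi'$ which lie in $\RC_F$. At a polar value the \emph{residue} is the invariant distribution and is supported at $\{0\}$, while the Hadamard ``finite part'' fails to be $\chi'$-equivariant --- it acquires a logarithmic scaling anomaly, as one already sees for $\mathrm{Pf}(x^{-2})$ on $\R$. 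Consequently, for $\chi'\in\RC_F$ (equivalently, $\chi\in \RC^-(n)$), the only $\chi'$-equivariant distribution in $\CS^*(V)^{\chi'}$ whose support lies in a given line $Fv$ and which arises from your integral construction is supported at $\{0\}$. Its class modulo $\CS^*(V,\{0\})^{\chi'}$ vanishes, so distinct lines do not give distinct classes, and your linear-independence argument collapses exactly in this case.

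The paper sidesteps this by first invoking the Fourier-transform intertwiner~\eqref{isovv}, which gives $\CS_\chi(V)\cong\CS_{\chi^{-1}}(V^*)$, together with the disjointness of $\RC^+(n)$ and $\RC^-(n)$, to reduce to the case $\chi\notin\RC^-(n)$, i.e.\ $\chi'\notin\RC_F$. In that range each $\eta_L$ has support equal to the full line $L$ (by Lemma~\ref{distribution0} there is nothing at the origin to absorb it), so the distributions attached to distinct lines are automatically linearly independent. You should insert this Fourier-transform reduction at the outset; without it the ``every $\chi'$'' part of your argument cannot be repaired by inspecting the zeta integral more closely, because the obstruction is structural rather than a matter of verification.

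Two smaller remarks: the equivariance sign in your formula is off (with the paper's push-forward convention, $a.\eta_v=\chi'(a)^{-1}\eta_v$, so you want $\chi'(a)^{-1}$ in the integrand, or equivalently $\eta_v=Z((\chi')^{-1},\phi|_{Fv})$). Also, for the non-archimedean case there is no regularization to worry about --- for $\chi'\neq 1$ the orbital integral over $F^\times v$ converges after choosing a suitable compact open fundamental domain, and Lemma~\ref{distribution0} gives $\CS^*(V,\{0\})^{\chi'}=0$ --- so the issue is purely archimedean, which again points to the need for the Fourier reduction in exactly the polar set $\RC^-(n)$.
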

 \begin{proof}
Since $\RC^+(n)$ and $\RC^-(n)$ are disjoint, by using the isomorphism \eqref{isovv}, it suffices to prove the lemma in the case when $\chi\notin \RC^-(n)$.

As a reformulation of  \eqref{disvs}, we have
\be
  \mathcal{S}^*(V)^{\chi^{-1}\,\cdot\,\abs{\,\cdot\,}_F^{-\frac{n}{2}}}\cong \left(\mathcal{S}_{\chi}(V)\right)^*\otimes \abs{\det}_F^{-\frac{1}{2}},
\ee
Note that $\chi\notin \RC^-(n)$ if and only if $\chi^{-1}\,\cdot\,\abs{\,\cdot\,}_F^{-\frac{n}{2}}\notin \Hom_{\mathrm{alg}}(F^\times,\C^\times)$. Thus we only need to show that $\mathcal{S}^*(V)^\chi$ is infinite dimensional when $\chi\notin \Hom_{\mathrm{alg}}(F^\times,\C^\times)$.

For each one dimensional subspace $L$ of $V$, let $\eta_L$ denote a nonzero distribution in the one dimensional space  $\mathcal{S}^*(L)^\chi$. Assuming $\chi\notin \Hom_{\mathrm{alg}}(F^\times,\C^\times)$, we know from  Lemma \ref{distribution0} that the support of $\eta_L$ equals $L$. Then the infinite family
\[
  \{\textrm{the push-forward of $\eta_L$ through the embedding $L\hookrightarrow V$ }\},
\]
where $L$ runs over all one dimensional subspace of $V$, is linearly independent in $\mathcal{S}^*(V)^\chi$. This shows that the space $\mathcal{S}^*(V)^\chi$ is infinite dimensional.

\end{proof}

\section{Proof of theorem \ref{thma00}}\label{secreal}

In this section, assume that $n\geq 2$. Recall the infinite dimensional representation $\CS_\chi(V\setminus \{0\})\cong\Ind_{\oP(v_0)}^{\GL(V)} \chi\otimes 1$ from the Introduction. Combining \cite[Section 2.4 and 3.4]{HL} and \cite[Theorem 1.1]{GoS}, we get the following lemma.

\begin{lem} \label{degen}

(a) If $\chi\notin \RC^+(n)\cup \RC^-(n)$, then the representation $\Ind_{\oP(v_0)}^{\GL(V)} \chi\otimes 1$ is irreducible; otherwise, it has length $2$ and has a unique irreducible subrepresentation.

(b) If $\chi\in \RC^+(n)$, then the irreducible quotient representation of $\Ind_{\oP(v_0)}^{\GL(V)} \chi\otimes 1$ is isomorphic to $\sigma_{V, \chi\cdot \abs{\,\cdot\,}_F^{-\frac{n}{2}}}\otimes \abs{\det}_F^\frac{1}{2}$.

(c) If $\chi\in \RC^-(n)$, then the irreducible subrepresentation of $\Ind_{\oP(v_0)}^{\GL(V)} \chi\otimes 1$ is isomorphic to $\left(\sigma_{V, \chi^{-1}\cdot \abs{\,\cdot\,}_F^{-\frac{n}{2}}}\otimes \abs{\det}_F^\frac{1}{2}\right)^*$.
\end{lem}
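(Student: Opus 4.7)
The plan is to identify $\Ind_{\oP(v_0)}^{\GL(V)} \chi \otimes 1$ with a standard degenerate principal series and transfer the structural results of the cited references. Fixing a basis of $V$ with $v_0 = e_1$, the subgroup $\oP(v_0)$ is the maximal parabolic stabilizing the line $F e_1$, with Levi $F^\times \times \GL_{n-1}(F)$; the inducing character is $\chi$ on the first Levi factor, trivial on the second factor and trivial on the unipotent radical. The induction is the normalized one (this is where the $\abs{\det}_F^{-1/2}$ twist in the isomorphism \eqref{isod} is absorbed). In this shape the representation is precisely a maximal-parabolic degenerate principal series of $\GL_n(F)$ associated to the partition $(1, n-1)$.

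For part (a), I would invoke \cite[Section 2.4]{HL} for $F = \R$, \cite[Section 3.4]{HL} for $F = \C$, and \cite[Theorem 1.1]{GoS} in the non-archimedean case. These works determine the reducibility locus and the length of the Jordan--H\"older series of $\Ind_{\oP(v_0)}^{\GL(V)} \chi \otimes 1$. The substantive verification is that the reducibility points in their parametrizations, after normalization and after passage from a complex parameter plus a character of a compact group back to $\chi$, match exactly the set $\RC^+(n) \cup \RC^-(n)$; this is a direct bookkeeping computation with the modular character $\delta_{\oP(v_0)}(\diag(a, h)) = \abs{a}_F^{n-1} \cdot \abs{\det h}_F^{-1}$. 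At each such point the references give length $2$ and a unique irreducible subrepresentation.

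For part (b), write $\chi = \chi_0 \cdot \abs{\,\cdot\,}_F^{n/2}$ with $\chi_0 \in \RC_F$. To pin down the finite-dimensional quotient concretely, I would construct an explicit $\GL(V)$-equivariant surjection from $\Ind_{\oP(v_0)}^{\GL(V)} \chi \otimes 1$ onto $\sigma_{V, \chi_0} \otimes \abs{\det}_F^{1/2}$. Realizing $\sigma_{V, \chi_0}$ (as defined in Section \ref{secd0}) as a space of homogeneous polynomial, or bi-homogeneous polynomial/antipolynomial in the complex case, functions on $V$ of the prescribed degree, the surjection is produced by pairing a section of the induced representation with a vector of $\sigma_{V,\chi_0}$, using the defining transformation law under $F^\times \times \GL_{n-1}(F)$; this map is nonzero and $\GL(V)$-equivariant, and by part (a) its image must be the unique irreducible quotient. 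Tracking the $\abs{\det}_F^{1/2}$ twist is a direct check against the normalization of induction.

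For part (c), take $\chi \in \RC^-(n)$ so that $\chi^{-1} \in \RC^+(n)$. I would deduce the claim from (b) by duality. The smooth contragredient of $\Ind_{\oP(v_0)}^{\GL(V)} \chi \otimes 1$ is naturally isomorphic, via the canonical $\GL(V) \cong \GL(V^*)$ by inverse transpose together with the Fourier-transform isomorphism \eqref{isovv}, to $\Ind_{\oP(v_0^*)}^{\GL(V^*)} \chi^{-1} \otimes 1$; this interchanges socle and cosocle, so applying (b) to $\chi^{-1}$ and taking duals identifies the unique irreducible subrepresentation as $\left(\sigma_{V, \chi^{-1}\cdot \abs{\,\cdot\,}_F^{-n/2}}\otimes \abs{\det}_F^{1/2}\right)^*$. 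The main obstacle throughout is the careful tracking of modular-character and determinantal twists so that the parameter sets $\RC^+(n)$ and $\RC^-(n)$, which are interchanged by $\chi \mapsto \chi^{-1}$, emerge cleanly from the normalizations used in the cited references.
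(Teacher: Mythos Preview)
Your plan is essentially the paper's: cite \cite{HL} and \cite{GoS} for the reducibility locus and length in (a), exhibit the finite-dimensional constituent explicitly in one of (b)/(c), and deduce the other by contragredience. Two points of divergence are worth noting.

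First, the paper reverses your order for (b) and (c): it first proves (c) by writing down explicit polynomial functions $f_I(g) = a_1^{k_1}\cdots a_n^{k_n}$ (in the entries of the last row of $g$) that visibly lie in a suitable unnormalized induction and span a copy of $\sigma_{V,\chi_0}$ as a \emph{subrepresentation}; then (b) follows from the standard fact that $\Ind_P^G(\chi\otimes 1)$ and $\Ind_P^G(\chi^{-1}\otimes 1)$ are contragredient \cite[Lemma 5.2.4]{Wal}. This order is a bit cleaner than yours, since producing elements of a subrepresentation is more concrete than producing a surjection; your sketch of the quotient map in (b) (``pairing a section with a vector of $\sigma_{V,\chi_0}$'') is vague as stated and would need Frobenius reciprocity or an explicit integral formula to be made precise. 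The paper also inserts an auxiliary isomorphism $\uInd_P^G(\chi\otimes 1)\cong \uInd_Q^G(1\otimes\chi^{-1})$ via $g\mapsto w g^{-t} w$ to match the parabolic conventions in \cite{HL}.

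Second, your duality step in (c) misapplies \eqref{isovv}. The Fourier-transform isomorphism relates the \emph{coinvariant spaces} $\CS_\chi(V)$ and $\CS_{\chi^{-1}}(V^*)$; it says nothing directly about the induced representations $\Ind_{\oP(v_0)}^{\GL(V)}\chi\otimes 1$, which need not coincide with $\CS_\chi(V)$ (indeed, Theorem \ref{thma00}(c) says they differ exactly when $\chi\in\RC^-(n)$). The duality you want is the purely representation-theoretic one, $\bigl(\Ind_P^G(\chi\otimes 1)\bigr)^{\!*}\cong \Ind_P^G(\chi^{-1}\otimes 1)$, which keeps everything on $\GL(V)$ and directly swaps socle and cosocle. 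With that correction your argument goes through.
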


\bp
Let $F=\R$. Set $G=\GL(V)$ and $P=\oP(v_0)$. We may assume that $v_0=(1,0,\cdots,0)^t$, thus $P$ has the form
$P=\begin{pmatrix}
\R^\times & * \\
0 & \GL_{n-1}(\R)
\end{pmatrix}$, and $P^\circ=\oP^\circ(v_0)$ is the subgroup of $P$ with the first column being $(1,0,\cdots,0)^t$.
The modular character $\Delta_P$ of $P$ satisfies
\[
\Delta_P(p)=\abs {\det(\Ad_p)}=\abs a ^{n-1} \abs {\det g} ^{-1}=\abs a ^n \abs {\det p} ^{-1},
\]
where $p=\begin{pmatrix}
a & x \\
0 & g
\end{pmatrix}\in P$. Denote by $\uInd_P^G(\chi\otimes 1)$ the non-normalized induction and observe that $$\uInd_P^G((\chi\otimes 1)\otimes \Delta_P^{\frac{1}{2}})=\Ind_P^G(\chi\otimes 1).$$
For any representation $\pi$ of $P$ and any character $\chi^\prime$ of $G$, there is a natural isomorphism
\be
\uInd_P^G(\pi\otimes \chi^\prime\vert_P)\cong \chi^\prime \otimes \uInd_P^G\pi.
\ee
It follows that
\be \label{ind-isom1}
\Ind_P^G(\chi\otimes 1)\cong \abs\det ^{-\frac{1}{2}}\otimes \uInd_P^G(\chi\cdot\abs\cdot^{\frac{n}{2}}\otimes 1)
\ee
by setting $\pi=\chi \otimes 1$ and $\chi^\prime=\abs\det ^{-\frac{1}{2}}$.

Let $Q=\begin{pmatrix}
\GL_{n-1}(\R) & * \\
0 & \R^\times
\end{pmatrix}$ be another parabolic subgroup of $G$, and let $Q^{\circ}$ be the subgroup of $Q$ with the last row being $(0,\cdots,0,1)$. Then $Q=Q^{\circ}\times \R^\times$. Set $w=\begin{pmatrix}
 & & 1 \\
 & \iddots & \\
1 & &
\end{pmatrix}$. We have an isomorphism
\be \label{ind-isom2}
\begin{array}{cccc}
   \tau: & \uInd_P^G(\chi\otimes 1) &\xrightarrow{\sim} &\uInd_{Q}^G(1\otimes \chi^{-1}),\\
       &f &\mapsto & \left(g\mapsto  f(wg^{-t}w)\right),
  \end{array}
\ee
which is intertwining with respect to the isomorphism $G \to G, g\mapsto wg^{-t}w$.

By (\cite{HL}, Theorem 3.41, 3.42, 3.43 and 3.44, applied with $k=1$), the representation
$\uInd_{Q}^G(1\otimes \chi)$ is irreducible if $\chi\notin\abs \cdot^{-\frac{n}{2}}\RC^+(n)\cup \abs \cdot^{-\frac{n}{2}}\RC^-(n)$; otherwise, it has length $2$ and has a unique irreducible subrepresentation.
Hence the assertion (a) follows from the isomorphisms \eqref{ind-isom1} and \eqref{ind-isom2}.

If $\chi=(\iota \vert _{\R^\times})^r(r\in \N)$. For $I=(k_1, k_2,\cdots, k_n)\in \N^n$ which satisfies $\abs I:=k_1+k_2+\cdots+k_n=r$, we define
\[
f_I\begin{pmatrix}
* & * &\cdots & * \\
\vdots &\vdots & &\vdots \\
* & * &\cdots & * \\
a_1 & a_2 & \cdots & a_n
\end{pmatrix}:=a_1^{k_1}a_2^{k_2}\cdots a_n^{k_n}.
\]
It is easy to check that $f_I\in \uInd_{Q}^G(1\otimes \chi)$, and $W:=\mbox{span}\{ f_I\vert I\in\N^n, \abs I=r \}$ is a (finite dimensional) subrepresentation of $\uInd_{Q}^G(1\otimes \chi)$ which is isomorphic to $\sigma_{V,\chi}$. Applying the isomorphisms \eqref{ind-isom1} and \eqref{ind-isom2}, we get the Part (c).

Note that the representations $\Ind_P^G(\chi\otimes 1)$ and $\Ind_P^G(\chi^{-1}\otimes 1)$ are contragredient (\cite[Lemma 5.2.4]{Wal}). Thus (c) implies (b), and we complete the proof for $F=\R$.

The proofs for the cases that $F=\C$ and $F$ is non-archimedean are analogous, and we omit the details here.

\ep

The short exact sequence
\[
  0\rightarrow \CS(V\setminus \{0\})\rightarrow \CS(V)\rightarrow (\CS^*(V,\{0\}))^*\rightarrow 0
\]
induces an exact sequence
\be\label{rexact}
\CS_\chi(V\setminus \{0\})\xrightarrow{\mathrm{j}_\chi} \CS_\chi(V)\rightarrow \left(\CS^*(V,\{0\})^{\chi^{-1}\,\cdot\,\abs{\,\cdot\,}_F^{-\frac{n}{2}}}\right)^*\otimes \abs{\det}_F^{-\frac{1}{2}}\rightarrow 0.
\ee
Here we have used the natural isomorphism
\[
 \left((\CS^*(V,\{0\}))^*\otimes \abs{\det}_F^{-\frac{1}{2}}\right)_{F^\times,\chi}\cong \left(\CS^*(V,\{0\})^{\chi^{-1}\,\cdot\,\abs{\,\cdot\,}_F^{-\frac{n}{2}}}\right)^*\otimes \abs{\det}_F^{-\frac{1}{2}}
\]
of representations of $\GL(V)$.

If $\chi\notin  \RC^-(n)$, then Lemma \ref{distribution0} and the exactness of \eqref{rexact}  imply that $\mathrm j_\chi$ is surjective.
We argue according to the three cases of Theorem \ref{thma00}.

\noindent \textbf{Case a}: $\chi\notin \RC^+(n)\cup \RC^-(n)$.

In this case, $\CS_\chi(V\setminus \{0\})$ is irreducible by Part (a) of  Lemma \ref{degen}. By Lemma \ref{infdim0}, $\CS_\chi(V\setminus \{0\})$ is nonzero. Therefore  the surjective homomorphism $\mathrm j_\chi$ is an isomorphism. This proves Part (a) of Theorem \ref{thma00}.

\noindent \textbf{Case b}: $\chi\in \RC^+(n)$.

In this case, by Lemma \ref{infdim0}, $\mathrm{ker}(\mathrm j_\chi)$ is a subrepresentation of $\CS_\chi(V\setminus \{0\})$ of infinite codimension. Then Parts (a) and (b) of  Lemma \ref{degen} implies that $\mathrm{ker}(\mathrm j_\chi)=\{0\}$, which further implies Part (b) of Theorem \ref{thma00}.

\noindent \textbf{Case c}: $\chi\in \RC^-(n)$.

In this case, applying Part (b) of Theorem \ref{thma00} to $V^*$, and using the isomorphism \eqref{isovv}, we know that $\CS_\chi(V)$ has length $2$ and has a unique irreducible subrepresentation.  The exact sequence \eqref{rexact} and  Lemma \ref{distribution0} imply that
\[
 \mathrm{coker}(\mathrm j_\chi)\cong \left(\sigma_{V, \chi^{-1}\cdot \abs{\,\cdot\,}_F^{-\frac{n}{2}}}\otimes \abs{\det}_F^\frac{1}{2}\right)^*.
 \]
 Thus the image of $\mathrm j_\chi$ is irreducible, and hence $\mathrm{ker}(\mathrm j_\chi)$ is irreducible as $\CS_\chi(V\setminus \{0\})$ has length $2$. Applying Part (c) of Lemma \ref{degen}, this proves Part (c) of Theorem \ref{thma00}.

\end{document}